\newtheorem{theorem}{Theorem}[section]
\newtheorem{proposition}{Proposition}[section]
\newtheorem{lemma}{Lemma}[section]
\newtheorem{corollary}{Corollary}[section]
\newtheorem{remark}{Remark}[section]
\numberwithin{equation}{section}
\title[Semilinear elliptic inverse problem]{H\"older stability for a  semilinear elliptic inverse problem}
\author[Mourad Choulli]{Mourad Choulli}
\address{Universit\'e de Lorraine,  France}
\email{mourad.choulli@univ-lorraine.fr}
\thanks{The author is supported by the grant ANR-17-CE40-0029 of the French National Research Agency ANR (project MultiOnde). 
}
\date{}
\begin{document}

\begin{abstract}
We are  concerned with the problem of determining the nonlinear term in a semilinear elliptic equation by boundary measurements. Precisely, we improve \cite[Theorem 1.3]{CHY}, where a logarithmic type stability estimate was proved. We show actually that we have a H\"older stability estimate with less boundary measurents and less regular nonlinearities. We establish our stability inequality by following the same method as  in \cite{Ch22}. This method consists in constructing special solutions vanishing on a subboundary of the domain.
\end{abstract}

\subjclass[2010]{35R30}

\keywords{Semilinear elliptic equation, localized Dirichlet-to-Neumann map, H\"older stability inequality.}

\maketitle

%\tableofcontents

\section{Introduction}\label{section1}

Let $\Omega$ be a $C^{1,1}$ bounded domain of $\mathbb{R}^n$ ($n\geq 3$) with boundary $\Gamma$. Fix $A=(a^{ij})$ a symmetric $n\times n$ matrix satisfying
\begin{equation}\label{e1.1}
\kappa |\xi|^2\le A\xi \cdot \xi ,\; \xi \in \mathbb{R}^n, \quad \max_{i,j}|a^{ij}|\le \kappa^{-1}
\end{equation}
where $0<\kappa <1$ is a given constant.

Pick $\alpha \ge 0$, $\mu_j>0$, $j=1,2$, and $0\le \mathfrak{c}<\kappa \lambda_1$, where $\lambda_1$ denotes the first eigenvalue of the Laplace operator on $\Omega$ under Dirichlet boundary condition. Consider further the assumption 

\noindent
\textbf{(a1)} $a\in C^1(\mathbb{R},\mathbb{R})$ and
\[
|a(z)|\le \mu_1 +\mu_2 |z|^\alpha,\quad a'(z)\ge -\mathfrak{c} ,\quad z\in \mathbb{R},
\]
and the BVP
\begin{equation}\label{e1.3}
\left\{
\begin{array}{ll}
-\mathrm{div}(A\nabla u) +a(u)=0\quad \mbox{in}\; \Omega ,
\\
u_{|\Gamma}=f .
\end{array}
\right.
\end{equation}

Let $f\in H^{1/2}(\Gamma)$. We say that $u\in H^1(\Omega)$ is a weak solution of the BVP \eqref{e1.3} if $u_{|\Gamma}=f$ in the trace sense and if
\[
\int_\Omega A\nabla u\cdot\nabla vdx+\int_\Omega a(u)vdx=0,\quad v \in H_0^1(\Omega ).
\]

Throughout this text, the ball of a normed space $N$ with center $0$ and radius $m>0$ will denoted  by $B_N(m)$.

Assume that $\alpha <(n+2)/(n-2)$ and that $a$ satisfies \textbf{(a1)}. Let $m>0$. By slight modifications of the proof \cite[Theorem 2.1]{CHY} we get that, for any $f\in H^{1/2}(\Gamma)$, the BVP \eqref{e1.3} admits a unique weak solution $u_a(f)\in H^1(\Omega)$. Furthermore, the following estimate holds
\begin{equation}\label{e1.4}
\|u_a(f)\|_{H^1(\Omega)}\le C(1+m^\alpha),\quad f\in B_{H^{1/2}(\Gamma)}(m).
\end{equation}
Here and henceforth $C=C(n,\Omega,\kappa, \alpha ,\mathfrak{c},\mu_1,\mu_2)$ denotes a generic constant.

We endow in the sequel $H^{1/2}(\Gamma)$ (identified with the quotient space $H^1(\Omega)/H_0^1(\Omega)$) with the quotient norm
\[
\|\varphi\|_{H^{1/2}(\Gamma)}=\min\left\{ \|v\|_{H^1(\Omega)};\; v\in \dot{\varphi}\right\},\quad \varphi \in H^{1/2}(\Gamma),
\]
where 
\[
\dot{\varphi}=\left\{v\in H^1(\Omega);\; v_{|\Gamma}=\varphi\right\}.
\]

We associate to $a$ the Dirichlet-to-Neumann map 
\[
\Lambda_a :H^{1/2}(\Gamma)\rightarrow H^{-1/2}(\Gamma), 
\]
defined by
\[
\langle \Lambda_a(f) ,\varphi\rangle =\int_\Omega A\nabla u_a(f)\cdot \nabla vdx+\int_\Omega a(u_a(f))vdx,\quad v\in \dot{\varphi}.
\] 
As $u_a(f)$ is the weak solution of the BVP \eqref{e1.3}, we easily check that the right hand side of the inequality above is independent of $v$, $v\in \dot{\varphi}$. On the other hand, we can mimic the proof of \cite[Lemma 2.1]{CHY} in order to obtain
\[
\left| \int_\Omega a(u_a(f))vdx \right|\le C\left(1+\|u_a(f)\|_{H^1(\Omega)}^\alpha\right)\|v\|_{H^1(\Omega)}.
\]
This and \eqref{e1.4} show that 
\[
\| \Lambda_a (f)\|_{H^{-1/2}(\Gamma)}\le C(1+m^\alpha)^\alpha,\quad f\in B_{H^{1/2}(\Gamma)}(m).
\]

For $n\ge 4$, fix $n/2<p<n$. Set $q_n=2n/(n-4)$ if $n>4$ and $q_4=2r/(2-r)$ for some arbitrary fixed $1\le r<2$.  Define
\begin{align*}
&\alpha_3=3,\qquad \alpha_n=q_n/p,\quad n\ge 4 ,
\\
&\beta_3=1/2,\qquad \beta_n=2-n/p, \quad n\ge 4,
\end{align*}
and
\begin{align*}
&\mathfrak{X}_3=H^2(\Omega)\cap C^{0,\beta_3}(\overline{\Omega}),\hskip 1.3cm \mathfrak{Z}_3=H^{3/2}(\Gamma),
\\
&\mathfrak{X}_n=W^{2,p}(\Omega)\cap C^{0,\beta_n}(\overline{\Omega}),\hskip 1cm \mathfrak{Z}_n=W^{2-1/p,p}(\Gamma),\quad n\ge 4.
\end{align*}

Henceforth, $\Gamma_0$ will denote a nonempty open subset of $\Gamma$ and $\Gamma_0\Subset \Gamma_1\subset \Gamma$. Define $H_{\Gamma_0}^{1/2}(\Gamma)$ as follows
\[
H_{\Gamma_0}^{1/2}(\Gamma)=\{f\in H^{1/2}(\Gamma);\; \mathrm{supp}(f)\subset \overline{\Gamma_0}\}.
\]
The (closed) subspace $H_{\Gamma_0}^{1/2}(\Gamma)$ will be equipped with the norm of $H^{1/2}(\Gamma)$.

Next, fix $\chi\in C_0^\infty (\Gamma_1)$ so that $\chi=1$ in $\overline{\Gamma}_0$. If $\psi \in H^{-1/2}(\Gamma)$ we define $\chi\psi$ by 
\[
\langle \chi \psi ,\varphi\rangle_{1/2} = \langle \psi ,\chi \varphi\rangle_{1/2},\quad \varphi\in H^{1/2}(\Gamma),
\]
where $\langle \cdot,\cdot \rangle_{1/2}$ is the duality pairing between $H^{1/2}(\Gamma)$ and its dual $H^{-1/2}(\Gamma)$.

Clearly, $\chi \psi\in H^{-1/2}(\Gamma)$, $\mathrm{supp}(\chi \psi)\subset \Gamma_1$ and the following identity holds
\begin{equation}\label{e1.5}
\langle \chi \psi ,\varphi\rangle_{1/2} = \langle \psi , \varphi\rangle_{1/2},\quad \varphi\in H_{\Gamma_0}^{1/2}(\Gamma).
\end{equation}

%For any $t\in \mathbb{R}$, $\mathfrak{f}_t$  will denote the constant function given by $\mathfrak{f}_t(s)=t$, $s\in \Gamma$. 

Consider the closed subset of  $\mathfrak{Z}_n$
%\begin{align*}
%&
\[
\mathfrak{Z}_n^0=\{f\in \mathfrak{Z}_n;\; \mathrm{supp}(f)\subset \overline{\Gamma_0}\}, 
\]
%&\mathfrak{Z}_n^1=\{f=\mathfrak{f}_t+g;\; g\in \mathfrak{Z}_n^0\}.
%\end{align*}
%Then it is not hard to check that $\mathfrak{Z}_n^0$ and $\mathfrak{Z}_n^1$ are two closed subspaces of 
%$\mathfrak{Z}_n$. We endow these two spaces with the norm of $\mathfrak{Z}_n$.
and define the partial Dirichlet-to-Neumann map
\[
\tilde{\Lambda}_a: f\in \mathfrak{Z}_n^0\mapsto \chi\Lambda_a(f)\in H^{-1/2}(\Gamma).
\]
%and denote the closure of $\mathfrak{Z}_n^0$ in $H^{1/2}(\Gamma)$ by $\overline{\mathfrak{Z}_n^1}$.

Let $\Theta$ be the vector space of functions $\Lambda: \mathfrak{Z}_n^0\rightarrow H^{-1/2}(\Gamma)$ that are everywhere Fr\'echet differentiable such that the Fr\'echet differential of $\Lambda$ at $f\in \mathfrak{Z}_n^0$ has a unique extension denoted by $d\Lambda (f)\in \mathscr{B}(H_{\Gamma_0}^{1/2}(\Gamma),H^{-1/2}(\Gamma))$ and such that for every $m>0$ we have
\[
\mathfrak{p}_m(\Lambda)=\sup_{f\in B_{\mathfrak{Z}_n^0}(m)}\left (\|\Lambda (f)\|_{H^{-1/2}(\Gamma)}+\|d\Lambda (f)\|_{\mathrm{op}}\right)<\infty.
\] 
Here and henceforth, $\|\cdot \|_{\mathrm{op}}$ denotes the usual norm of $\mathscr{B}(H_{\Gamma_0}^{1/2}(\Gamma),H^{-1/2}(\Gamma))$.

Observe that $(\mathfrak{p}_m)_{m>0}$ defines a family of semi-norms on $\Theta$. We can then endow $\Theta$ with the topology induced by this family of semi-norms.

Pick a non decreasing function $\gamma:\mathbb{R}\rightarrow (0,\infty)$   and consider the assumption

\noindent 
\textbf{(a2)} $a\in C^1(\mathbb{R})$ satisfies $|a'(z)|\le \gamma(|z|)$, $z\in \mathbb{R}$.

Under the assumption that $a$ satisfies both \textbf{(a1)} and \textbf{(a2)} with $\alpha\le \alpha_n$, we easily derive from \eqref{e2.1} and \eqref{e2.8}  that $\tilde{\Lambda}_a\in \Theta$.

In what follows 
\begin{align*}
&\mathcal{C}_3=\mathcal{C}_3(n,\Omega,\kappa, \mathfrak{c},\alpha, \mu_1,\mu_2),
\\
&\mathcal{C}_4=\mathcal{C}_4(n,\Omega,\kappa, \mathfrak{c},\alpha, \mu_1,\mu_2, p, r),
\\
&\mathcal{C}_n=\mathcal{C}_n(n,\Omega,\kappa, \mathfrak{c},\alpha, \mu_1,\mu_2, p),\quad n>4,
\\
&C_m=C_m(\mathcal{C}_n,m,\gamma(\varrho_m))
\end{align*}
denote generic constants.

For sake of clarity we first state a H\"older stability result for the problem of determining $a$ from its corresponding partial Dirichlet to Neumann map $\tilde{\Lambda}_a$. A more general result will be given below.

It is worth noticing that, contrary to the Dirichlet-to-Neumann map associated to a Schr\"odinger equation which is a linear map, $\Lambda_a$ is a nonlinear map. Moreover, as we saw above, $\Lambda_a$ does not belong to a normed vector space.

\begin{theorem}\label{theorem1.0}
Assume that $\alpha \le \alpha_n$ and let $a_1,a_2$ satisfy both \textbf{(a1)} and \textbf{(a2)}. For every $\tau >0$, there exists $m=m(\tau)>0$ such that
\begin{equation}\label{e1.6.0}
\|a'_1-a'_2\|_{C([-\tau ,\tau])}\le C_m\mathfrak{p}_m\left( \tilde{\Lambda}_{a_1}-\tilde{\Lambda}_{a_2}\right)^{\beta_n/(2+\beta_n)}.
\end{equation}
\end{theorem}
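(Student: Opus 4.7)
The plan is to follow the linearization approach of \cite{Ch22}: reduce the nonlinear recovery of $a$ to a partial-data inverse problem for a linear Schr\"odinger-type potential via Fr\'echet differentiation of $\tilde{\Lambda}_a$, and then invoke a H\"older stability estimate for that linear problem obtained by constructing special solutions that vanish on $\Gamma\setminus\Gamma_1$.

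For the first reduction, I fix $c\in [-\tau,\tau]$ and choose a family of boundary data $f_c\in \mathfrak{Z}_n^0$ with $\|f_c\|_{\mathfrak{Z}_n}\le m=m(\tau)$ whose associated solutions $u_{a_i}(f_c)$ realize the value $c$ on some interior region of interest (for example, using a cutoff of a constant plus a corrector). Differentiating $\tilde{\Lambda}_{a_i}$ at $f_c$ in a direction $h\in H_{\Gamma_0}^{1/2}(\Gamma)$ gives
\[
d\tilde{\Lambda}_{a_i}(f_c)h=\chi\,(A\nabla w_i\cdot\nu)\big|_{\Gamma},
\]
where $w_i\in H^1(\Omega)$ solves the linearized problem $-\mathrm{div}(A\nabla w_i)+q_i w_i=0$ in $\Omega$, $w_i=h$ on $\Gamma$, with potential $q_i=a'_i(u_{a_i}(f_c))$. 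The difference $d\tilde{\Lambda}_{a_1}(f_c)-d\tilde{\Lambda}_{a_2}(f_c)$ is therefore the restricted DN-map difference of two linear Schr\"odinger operators with potentials $q_i$, and is controlled in $\mathscr{B}(H_{\Gamma_0}^{1/2},H^{-1/2})$ by $\mathfrak{p}_m(\tilde{\Lambda}_{a_1}-\tilde{\Lambda}_{a_2})$ by the very definition of the seminorm.

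Second, I apply the partial-data H\"older stability result from \cite{Ch22} to this linear problem. Given two solutions $v_1,v_2$ of the linearized equations that vanish on $\Gamma\setminus\Gamma_1$ and concentrate near a prescribed interior point $x_0$, the Alessandrini-type integral identity
\[
\int_\Omega (q_1-q_2)\,v_1v_2\,dx=\bigl\langle (d\tilde{\Lambda}_{a_1}(f_c)-d\tilde{\Lambda}_{a_2}(f_c))v_1\big|_\Gamma,\,v_2\big|_\Gamma\bigr\rangle
\]
bounds a mollified value of $q_1-q_2$ by $\mathfrak{p}_m(\tilde{\Lambda}_{a_1}-\tilde{\Lambda}_{a_2})$ up to a concentration parameter. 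Combined with the uniform $C^{0,\beta_n}$ modulus of $q_1-q_2=a'_1(u_{a_1}(f_c))-a'_2(u_{a_2}(f_c))$ (inherited from $u_{a_i}(f_c)\in\mathfrak{X}_n$ and the assumption $\alpha\le\alpha_n$) and a standard interpolation between $L^2$ and $C^{0,\beta_n}$, optimizing in the concentration parameter yields
\[
\|q_1-q_2\|_{L^\infty(\Omega)}\le C_m\, \mathfrak{p}_m(\tilde{\Lambda}_{a_1}-\tilde{\Lambda}_{a_2})^{\beta_n/(2+\beta_n)}.
\]
Evaluating at points $x$ where $u_{a_i}(f_c)(x)=c$ gives the pointwise control of $|a'_1(c)-a'_2(c)|$, and letting $c$ range over $[-\tau,\tau]$ produces the uniform bound \eqref{e1.6.0}.

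The main obstacle will be the joint construction, in the partial-data setting, of the vanishing special solutions $v_1,v_2$ that concentrate at an arbitrary prescribed interior point while still satisfying the Dirichlet condition on $\Gamma\setminus\Gamma_1$, together with the parametrized family $f_c\in\mathfrak{Z}_n^0$ that forces $u_{a_i}(f_c)=c$ on a region containing these concentration points with uniform $\mathfrak{Z}_n$-norm. The exponent $\beta_n/(2+\beta_n)$ is exactly what the interpolation between the $L^2$-duality bound and the $\beta_n$-H\"older modulus of the potentials produces; this is why the regularity threshold $\alpha\le\alpha_n$ is sharp, as any loss in the $C^{0,\beta_n}$ control of $u_{a_i}(f_c)$ would degrade the exponent.
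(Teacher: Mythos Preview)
Your linearization step is right, but the subsequent reduction has a genuine gap that undermines the whole scheme. You want to evaluate $q_1-q_2$ at an \emph{interior} point $x$ where ``$u_{a_i}(f_c)(x)=c$'' and read off $a'_1(c)-a'_2(c)$. But $q_1(x)-q_2(x)=a'_1(u_{a_1}(f_c)(x))-a'_2(u_{a_2}(f_c)(x))$, and at an interior point there is no reason that $u_{a_1}(f_c)(x)$ and $u_{a_2}(f_c)(x)$ coincide: they solve \emph{different} semilinear equations. So even if you manage to control $\|q_1-q_2\|_{L^\infty(\Omega)}$ (which, with only partial data and special solutions vanishing on $\Gamma\setminus\Gamma_1$, you will not: that would require interior concentration, which typically forces CGO/Runge arguments and logarithmic rather than H\"older rates), you still cannot extract $|a'_1(c)-a'_2(c)|$ from it.

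The paper bypasses both obstacles with a single observation: evaluate at a \emph{boundary} point $x_\ast\in\Gamma_0$. There $u_{a_1}(f)(x_\ast)=u_{a_2}(f)(x_\ast)=f(x_\ast)$ automatically, so choosing $f=\mathfrak{f}_t=th$ with $h\in\mathfrak{Z}_n^0$, $h(x_\ast)=1$, forces $\sigma^t_{a_j}(x_\ast):=a'_j(u_{a_j}(\mathfrak{f}_t))(x_\ast)=a'_j(t)$ for both $j$. The special solutions are then built from the Levi parametrix with singularity placed just \emph{outside} $\Omega$ near $x_\ast$ (so they vanish on $\Gamma\setminus\Gamma_0$ by construction), and one only needs the pointwise boundary determination
\[
|\sigma^t_{a_1}(x_\ast)-\sigma^t_{a_2}(x_\ast)|\le C\,\|d\tilde{\Lambda}^t_{a_1}(0)-d\tilde{\Lambda}^t_{a_2}(0)\|_{\mathrm{op}}^{\beta_n/(2+\beta_n)},
\]
not an $L^\infty(\Omega)$ bound. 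Taking the supremum over $|t|\le\tau$ and bounding the right-hand side by $\mathfrak{p}_m(\tilde{\Lambda}_{a_1}-\tilde{\Lambda}_{a_2})$ gives Theorem~\ref{theorem1.0}. In short: move the concentration point to $\Gamma_0$ and drop the attempt to prescribe interior values of $u_{a_i}$.
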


Observe that, with the aid of the mean value theorem, we easily derive from \eqref{e1.6.0} the following estimate
\[
\|a_1-a_2\|_{C([-\tau ,\tau])}\le |a_1(0)-a_2(0)|+\tau C_m\mathfrak{p}_m\left( \tilde{\Lambda}_{a_1}-\tilde{\Lambda}_{a_2}\right)^{\beta_n/(2+\beta_n)}.
\]
This estimate yields in a straightforward manner the following uniqueness result.

\begin{corollary}\label{corollary1.0}
Suppose that $\alpha\le \alpha_n$ and let $a_1,a_2$ satisfy \textbf{(a1)} and \textbf{(a2)} together with $a_1(0)=a_2(0)$. If $\tilde{\Lambda}_{a_1}=\tilde{\Lambda}_{a_2}$ then $a_1=a_2$.
\end{corollary}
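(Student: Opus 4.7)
The plan is to deduce the uniqueness statement as an immediate consequence of the quantitative estimate displayed just above the corollary, which is itself obtained from Theorem \ref{theorem1.0} by applying the mean value theorem. The key observation is that if two partial Dirichlet-to-Neumann maps coincide, then their difference, viewed as an element of $\Theta$, has every $\mathfrak{p}_m$-seminorm equal to zero. Once this is in hand, plugging into the stability estimate leaves nothing on the right-hand side, and the conclusion follows.

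More concretely, first I would note that the hypothesis $\tilde{\Lambda}_{a_1}=\tilde{\Lambda}_{a_2}$ means the difference $\tilde{\Lambda}_{a_1}-\tilde{\Lambda}_{a_2}$ is the zero map on $\mathfrak{Z}_n^0$, so its Fréchet differential also vanishes everywhere. Directly from the definition
\[
\mathfrak{p}_m(\Lambda)=\sup_{f\in B_{\mathfrak{Z}_n^0}(m)}\left(\|\Lambda(f)\|_{H^{-1/2}(\Gamma)}+\|d\Lambda(f)\|_{\mathrm{op}}\right),
\]
it follows that $\mathfrak{p}_m\!\left(\tilde{\Lambda}_{a_1}-\tilde{\Lambda}_{a_2}\right)=0$ for every $m>0$.

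Next, for an arbitrary $\tau>0$ I would invoke the $m=m(\tau)$ provided by Theorem \ref{theorem1.0} and apply the displayed consequence
\[
\|a_1-a_2\|_{C([-\tau,\tau])}\le |a_1(0)-a_2(0)|+\tau C_m\mathfrak{p}_m\!\left(\tilde{\Lambda}_{a_1}-\tilde{\Lambda}_{a_2}\right)^{\beta_n/(2+\beta_n)}.
\]
The normalization $a_1(0)=a_2(0)$ eliminates the first term on the right, and the previous paragraph eliminates the second. Hence $a_1=a_2$ on $[-\tau,\tau]$. Since $\tau>0$ is arbitrary, taking $\tau\to\infty$ yields $a_1\equiv a_2$ on $\mathbb{R}$.

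There is essentially no obstacle here beyond checking that the zero element of $\Theta$ has zero seminorms, which is formal. The substantive content of the corollary is entirely absorbed into Theorem \ref{theorem1.0}; the corollary is a clean logical consequence once the Hölder estimate and the additional normalization at $0$ are granted.
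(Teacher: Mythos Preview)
Your proposal is correct and follows exactly the route indicated in the paper, which simply states that the displayed estimate ``yields in a straightforward manner'' the uniqueness result. You have merely spelled out the obvious details: $\tilde{\Lambda}_{a_1}=\tilde{\Lambda}_{a_2}$ forces $\mathfrak{p}_m(\tilde{\Lambda}_{a_1}-\tilde{\Lambda}_{a_2})=0$ for every $m$, the normalization $a_1(0)=a_2(0)$ kills the remaining term, and letting $\tau\to\infty$ gives $a_1\equiv a_2$.
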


As we already mentioned, we prove a result implying Theorem \ref{theorem1.0}. Before giving the statement of this result we need to introduce new definitions and notations.

Fix $x_\ast\in \Gamma_0$ and $h\in \mathfrak{Z}_n^0$ satisfying $h(x_\ast)=1$, and, for every $t\in \mathbb{R}$, let $\mathfrak{f}_t=th$.

Define the family of partial Dirichlet-to-Neumann maps $(\tilde{\Lambda}_a^t)_{t\in \mathbb{R}}$ as follows
\[
\tilde{\Lambda}_a^t : f\in \mathfrak{Z}_n^0\mapsto \chi\Lambda_a(\mathfrak{f}_t+f)\in H^{-1/2}(\Gamma),\quad t\in \mathbb{R}.
\]
From a result in Section \ref{section3}, the Fréchet differential of $\tilde{\Lambda}_a^t$ at $0$ has a bounded extension $d\tilde{\Lambda}_a^t(0)\in \mathscr{B}(H_{\Gamma_0}^{1/2}(\Gamma),H^{-1/2}(\Gamma))$.

Our main goal in this work is to prove the following theorem, where
\[
m_n^\tau=\max_{|t|\le \tau}\|\mathfrak{f}_t\|_{\mathfrak{Z}_n}=\tau\|h\|_{\mathfrak{Z}_n^0},\quad \tau >0.
\]

\begin{theorem}\label{theorem1.1}
Assume that $\alpha \le \alpha_n$. Let $a_1,a_2$ satisfying \textbf{(a1)} and \textbf{(a2)}. For each $\tau >0$, we have
\begin{equation}\label{e1.6}
\|a'_1-a'_2\|_{C([-\tau ,\tau])}\le C_{m_n^\tau}\sup_{|t|\le \tau}\|d\tilde{\Lambda}^t_{a_1}(0)-d\tilde{\Lambda}^t_{a_2}(0)\|_{\mathrm{op}}^{\beta_n/(2+\beta_n)}.
\end{equation}
\end{theorem}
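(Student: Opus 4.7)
The plan is to reduce the nonlinear inverse problem to a one-parameter family of linear Schrödinger-type inverse problems and then invoke the partial-data H\"older stability obtained in \cite{Ch22}. First I would identify $d\tilde{\Lambda}^t_a(0)$, whose rigorous construction is promised in Section \ref{section3}, with the partial Dirichlet-to-Neumann map of the linearized equation. Setting $u_a^t := u_a(\mathfrak{f}_t)$ and $q_a^t := a'(u_a^t)$, a direct weak-formulation computation yields
\[
d\tilde{\Lambda}^t_a(0)(f) = \chi\,\Lambda_{A,q_a^t}(f),\qquad f\in H^{1/2}_{\Gamma_0}(\Gamma),
\]
where $\Lambda_{A,q}$ is the usual DN map associated to $-\mathrm{div}(A\nabla\cdot)+q\,\cdot$. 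The estimate \eqref{e1.4} together with $u_a^t\in \mathfrak{X}_n\hookrightarrow C^{0,\beta_n}(\overline{\Omega})$ and \textbf{(a2)} give a uniform bound $\|q_a^t\|_{L^\infty(\Omega)}\le \gamma(\varrho_{m_n^\tau})$ for $|t|\le \tau$, so that the family $\{\Lambda_{A,q_{a_i}^t}\}$ falls into the scope of the linear theory with constants depending only on $m_n^\tau$.

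Next I would apply the H\"older stability for the partial DN map of a linear Schr\"odinger operator proved in \cite{Ch22}, where the key ingredient is the construction of special solutions vanishing on $\Gamma\setminus \Gamma_1$. This should deliver a pointwise \emph{boundary} estimate: for every $x\in \Gamma_0$,
\[
|q_{a_1}^t(x)-q_{a_2}^t(x)|\le C_{m_n^\tau}\|d\tilde{\Lambda}^t_{a_1}(0)-d\tilde{\Lambda}^t_{a_2}(0)\|_{\mathrm{op}}^{\beta_n/(2+\beta_n)},
\]
the exponent $\beta_n/(2+\beta_n)$ being the interpolation exponent between the $C^{0,\beta_n}$ boundary H\"older regularity of $q_a^t$ (inherited from $u_a^t\in \mathfrak{X}_n$) and the raw operator-norm control of the DN-map difference, exactly as in the linear problem.

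To connect $q_a^t$ with $a'$, I would exploit the freedom in the base datum $\mathfrak{f}_t = th$. Since $h(x_\ast)=1$, boundary continuity gives $u_a^t(x_\ast)=\mathfrak{f}_t(x_\ast)=t$, hence $q_a^t(x_\ast)=a'(t)$. Specializing the previous inequality at $x=x_\ast$ yields
\[
|a_1'(t)-a_2'(t)|\le C_{m_n^\tau}\|d\tilde{\Lambda}^t_{a_1}(0)-d\tilde{\Lambda}^t_{a_2}(0)\|_{\mathrm{op}}^{\beta_n/(2+\beta_n)},
\]
and taking the supremum over $t\in [-\tau,\tau]$ finishes the proof.

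The main obstacle I expect is the clean transposition of the partial-data linear stability of \cite{Ch22} to the present family of potentials. One must verify that the special solutions vanishing on $\Gamma\setminus\Gamma_1$ can be built with estimates depending only on $\|q_{a_i}^t\|_{L^\infty}$, so that the constant in the resulting boundary bound depends only on $m_n^\tau$ and $\gamma(\varrho_{m_n^\tau})$ and not on $t$ itself; and that the passage from the operator norm in $\mathscr{B}(H^{1/2}_{\Gamma_0}(\Gamma),H^{-1/2}(\Gamma))$ to a pointwise H\"older bound at the single point $x_\ast\in \Gamma_0$ can be carried out uniformly. Granting this, the nonlinear H\"older estimate \eqref{e1.6} reduces in a transparent way to the (already nontrivial) linear result.
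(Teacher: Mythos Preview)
Your proposal is correct and follows essentially the same route as the paper: linearize at $\mathfrak{f}_t$ to identify $d\tilde{\Lambda}^t_a(0)$ with the partial DN map of the Schr\"odinger operator with potential $\sigma_a^t=a'(u_a(\mathfrak{f}_t))$, apply a pointwise boundary stability estimate for the linear problem at $x_\ast$, and use $u_a(\mathfrak{f}_t)(x_\ast)=t$ to recover $a'(t)$. Two small remarks: the paper does not simply quote the linear estimate from \cite{Ch22} but rather establishes the needed inequality \eqref{e3.11} directly in Section~\ref{section2} (combining the special-solution construction from \cite{Ch22} with an estimate from \cite{Ch21}); and the constant in that linear result depends on a bound for $\|\sigma_a^t\|_{C^{0,\beta_n}(\overline{\Omega})}$, not merely on $\|\sigma_a^t\|_{L^\infty}$, so the uniform control you need is precisely the $C^{0,\beta_n}$ bound you already noted follows from $u_a^t\in\mathfrak{X}_n$ and \textbf{(a2)}.
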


Note that inequality \eqref{e1.6} can be rewritten in term of $\tilde{\Lambda}_{a_j}$, $j=1,2$. Precisely, we have
\[
\|a'_1-a'_2\|_{C([-\tau ,\tau])}\le C_{m_n^\tau}\sup_{|t|\le \tau}\|d\tilde{\Lambda}_{a_1}(\mathfrak{f}_t)-d\tilde{\Lambda}_{a_2}(\mathfrak{f}_t)\|_{\mathrm{op}}^{\beta_n/(2+\beta_n)}.
\]

Also, as for Theorem \ref{theorem1.0}, for each $\tau>0$, \eqref{e1.6} implies
\begin{align*}
\|a_1-a_2\|_{C([-\tau ,\tau])}\le |a_1(0)&-a_2(0)|
\\
&+\tau C_{m_n^\tau}\sup_{|t|\le \tau}\|d\tilde{\Lambda}^t_{a_1}(0)-d\tilde{\Lambda}^t_{a_2}(0)\|_{\mathrm{op}}^{\beta_n/(2+\beta_n)},
\end{align*}
yielding the following uniqueness result.

\begin{corollary}\label{corollary1.1}
Assume that $\alpha\le \alpha_n$ and let $a_1,a_2$ satisfy \textbf{(a1)} and \textbf{(a2)} together with $a_1(0)=a_2(0)$. If $\tilde{\Lambda}^t_{a_1}=\tilde{\Lambda}^t_{a_2}$ in a neighborhood of the origin, for each $t\in \mathbb{R}$, then $a_1=a_2$.
\end{corollary}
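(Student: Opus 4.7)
The plan is to deduce Corollary \ref{corollary1.1} directly from Theorem \ref{theorem1.1} via the displayed consequence that precedes the corollary statement, namely
\[
\|a_1-a_2\|_{C([-\tau ,\tau])}\le |a_1(0)-a_2(0)|+\tau C_{m_n^\tau}\sup_{|t|\le \tau}\|d\tilde{\Lambda}^t_{a_1}(0)-d\tilde{\Lambda}^t_{a_2}(0)\|_{\mathrm{op}}^{\beta_n/(2+\beta_n)}.
\]
The strategy is to show that under the hypothesis, the supremum on the right vanishes for every $\tau>0$, after which the assumption $a_1(0)=a_2(0)$ forces $a_1=a_2$ on $[-\tau,\tau]$ for arbitrary $\tau$, hence on all of $\mathbb{R}$.

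First, I would fix $t\in\mathbb{R}$ and argue that equality of $\tilde{\Lambda}_{a_1}^t$ and $\tilde{\Lambda}_{a_2}^t$ on some neighborhood of $0$ in $\mathfrak{Z}_n^0$ forces equality of their Fréchet differentials at $0$: indeed, if two Fréchet differentiable maps coincide on an open set, their differentials coincide there, and in particular at $0$. Thus the differentials $d\tilde{\Lambda}_{a_1}^t(0)$ and $d\tilde{\Lambda}_{a_2}^t(0)$, viewed as continuous linear maps from $\mathfrak{Z}_n^0$ into $H^{-1/2}(\Gamma)$, agree on all of $\mathfrak{Z}_n^0$. Since each of these has, by the construction recalled just before the theorem, a \emph{unique} bounded extension to the larger space $H_{\Gamma_0}^{1/2}(\Gamma)$, uniqueness of the extension yields equality of the two operators in $\mathscr{B}(H_{\Gamma_0}^{1/2}(\Gamma),H^{-1/2}(\Gamma))$. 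In particular $\|d\tilde{\Lambda}_{a_1}^t(0)-d\tilde{\Lambda}_{a_2}^t(0)\|_{\mathrm{op}}=0$ for every $t$.

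Inserting this identity into the inequality above, together with $a_1(0)=a_2(0)$, gives $\|a_1-a_2\|_{C([-\tau,\tau])}=0$ for each $\tau>0$, so $a_1=a_2$ pointwise on $\mathbb{R}$. The only mildly delicate point is the passage from equality of the differentials on the subspace $\mathfrak{Z}_n^0$ to equality of their extensions on $H_{\Gamma_0}^{1/2}(\Gamma)$; this is essentially a density/uniqueness-of-extension statement and is already built into the definition of $\Theta$, so no new analysis is required. The rest is a direct substitution into the quantitative estimate of Theorem \ref{theorem1.1}.
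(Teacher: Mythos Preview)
Your proposal is correct and follows exactly the approach the paper takes: the corollary is stated as an immediate consequence of the displayed inequality preceding it, and your argument spells out precisely the two trivial observations (equal maps on a neighborhood have equal differentials, and the unique-extension clause in the definition of $\Theta$ then forces $\|d\tilde{\Lambda}^t_{a_1}(0)-d\tilde{\Lambda}^t_{a_2}(0)\|_{\mathrm{op}}=0$) needed to conclude.
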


We point out that from the proof of Theorem \ref{theorem1.1}, the knowledge of $\tilde{\Lambda}^t_a$ in a neighborhood of the origin (or equivalently the knowledge of $\tilde{\Lambda}_a$ in a neighborhood of $\mathfrak{f}_t$) only determines uniquely $a(t)$.

There is a large recent literature devoted to the uniqueness issue concerning the determination of nonlinearities in quasilinear and semilinear elliptic equations by boundary measurements. We refer for instance to  \cite{CFKKU, IS, IY,KN,KKU, KU1,KU,LLLS1,LLLS, MU, Su, SU} and references therein. Of course, this list of references is far to be exhaustive.

To the best of our knowledge, the stability issue was only considered in \cite{Ch22, CHY,Ki}. 

In the present work we adapt the analysis in \cite{Ch22} to improve the stability result in \cite{CHY}. Our construction of special solutions is borrowed from \cite{Ki}. These special solutions behave locally near a boundary point like the Levi's parametrix of the linearized operator near a singular point.

It is worth noticing that we have a Lipschitz stability estimate in the quasilinear case. While the stability estimate in the semilinear case is only of H\"older type. The fact that we have a better stability estimate in the quasilinear case can be roughtly explained by the fact the nonlinearity has more influence on the solution in the quasilinear case than in the semilinear case.

\section{Pointwise determination of the potential at the boundary}\label{section2}

Let $\sigma \in C^{0,\beta}(\overline{\Omega})$ satisfying 
\begin{equation}\label{e3.1}
 -\mathfrak{c}\le \sigma, \quad \|\sigma\|_{C^{0,\beta}(\overline{\Omega})}\le  \mathfrak{c}', 
 \end{equation}
 where $0<\beta <1$ and $\mathfrak{c}'>0$ are arbitrary fixed constants. 
 
 Let $f\in H^{1/2}(\Gamma)$. We proceed as in \cite[Lemma A2]{Ch22} in order to prove that the BVP 
\begin{equation}\label{e3.2}
\left\{
\begin{array}{ll}
-\mathrm{div}(A\nabla u) +\sigma u=0\quad \mbox{in}\; \Omega ,
\\
u_{|\Gamma}=f ,
\end{array}
\right.
\end{equation}
admits a unique weak solution $u_\sigma (f)\in H^1(\Omega)$ satisfying
\[
\|u_\sigma\|_{H^1(\Omega)}\le C\|f\|_{H^{1/2}(\Gamma)},
\]
where $C=C(n,\Omega,\kappa , \mathfrak{c},\mathfrak{c}')>0$ is a constant.

As in the preceding section,  define the Dirichlet-to-Neumann map $\Lambda_\sigma$ by
\[
\Lambda_\sigma :H^{1/2}(\Gamma)\rightarrow H^{-1/2}(\Gamma),
\]
 associated to $\sigma$, by
\[
\langle \Lambda_\sigma(f) ,\varphi\rangle_{1/2} =\int_\Omega A\nabla u_\sigma (f)\cdot \nabla vdx+\int_\Omega \sigma u_\sigma (f)vdx,\quad f,\varphi\in H^{1/2}(\Gamma),\; v\in \dot{\varphi}.
\] 
Also, define the partial Dirichlet-to-Neumann map $\tilde{\Lambda}_\sigma$ as follows
\[
\tilde{\Lambda}_\sigma:f\in H_{\Gamma_0}^{1/2}(\Gamma)\mapsto \chi\Lambda_\sigma (f)\in H^{-1/2}(\Gamma).
\]
We easily derive from the last identity that the following equality holds
\[
\langle \tilde{\Lambda}_\sigma(f) ,\varphi\rangle_{1/2} =\int_\Omega A\nabla u_\sigma (f)\cdot \nabla vdx+\int_\Omega \sigma u_\sigma (f)vdx,
\] 
for every $ f\in H^{1/2}(\Gamma)$, $\varphi\in H_{\Gamma_0}^{1/2}(\Gamma)$ and $v\in \dot{\varphi}$.

Pick $\sigma_1,\sigma_2\in C^{0,\beta}(\overline{\Omega})$ satisfying \eqref{e3.1} and  set $\sigma=\sigma_1-\sigma_2$.

For $f,g\in H^{1/2}(\Gamma)$, recall the following well known formula 
\[
\int_\Omega \sigma u_{\sigma_1}(f)u_{\sigma_2}(g)dx=\langle \Lambda_{\sigma_1}(f)-\Lambda_{\sigma_2}(f) ,g\rangle_{1/2},\quad f,g\in H^{1/2}(\Gamma).
\]
This identity yields 
\begin{equation}\label{e3.3}
\int_\Omega \sigma u_{\sigma_1}(f)u_{\sigma_2}(g)dx=\langle \tilde{\Lambda}_{\sigma_1}(f)-\tilde{\Lambda}_{\sigma_2}(f) ,g\rangle_{1/2},\quad f,g\in H_{\Gamma_0}^{1/2}(\Gamma).
\end{equation}
The Levi's parametrix associated to the operator $\mathrm{div}(A\nabla \cdot\, )$ is usually given by
\[
H(x,y)=\frac{[A^{-1}(x-y)\cdot (x-y)]^{(2-n)/2}}{(n-2)|\mathbb{S}^{n-1}|[\mathrm{det}\, A]^{1/2}},\quad x,y\in \mathbb{R}^n,\; x\ne y.
\]

Fix $x_0\in \Gamma_0$ and $r_0>0$ sufficiently small in such a way that $B(x_0,r_0)\cap \Gamma\subset \Gamma_0$. As $B(x_0,r_0) \setminus \overline{\Omega}$ contains a cone with a vertex at $x_0$, we find $\delta_0>0$ and a vector $\xi \in \mathbb{S}^{n-1}$ such that, for each $0<\delta \le \delta_0$, we have $y_\delta=x_0+\delta \xi \in B(x_0,r_0)\setminus \overline{\Omega}$ and 
\[
\mathrm{dist}(y_\delta,\overline{\Omega})\ge c\delta, \quad \mathrm{dist}(y_\delta,\partial \Omega_0)\ge r_0/2,
\]
for some constant $c=c(\Omega)>0$. 

Let $\Omega_0=\Omega\cup B(x_0,r_0)$ and fix $1\le j\le n$, $0<\delta\le \delta_0$ arbitrarily.

Set $\mathfrak{h}_\delta^j=\partial_jH(\cdot ,y_\delta)$ and let $v_\delta^j$ denotes the unique weak solution of the BVP
\[
\left\{
\begin{array}{ll}
\mathrm{div}(A\nabla v)=0\quad \mathrm{in}\; \Omega_0, 
\\
v_{|\partial \Omega_0}=\mathfrak{h}_\delta^j.
\end{array}
\right.
\]
We proceed as in \cite{Ch22} in order to derive the following estimate
\begin{equation}\label{e3.4}
\|v_\delta^j\|_{H^1(\Omega)}\le C,
\end{equation}
where $C=C(n,\Omega, \kappa, x_0)$.

Set $f_\delta^j=(\mathfrak{h}_\delta^j- v_\delta^j)_{|\Gamma}$. This definition guarantees that $f_\delta^j\in H_{\Gamma_0}^{1/2}(\Gamma)$. Let then $w_{\sigma,\delta}^j=u_\sigma(f_\delta^j)$. That is $w_{\sigma,\delta}^j$ is the weak solution of the BVP \eqref{e3.2} with $f=f_\delta^j$.

Define $z_{\sigma,\delta}^j=w_{\sigma,\delta}^j-\mathfrak{h}_\delta^j$. Then one can  check that $z_{\sigma,\delta}^j$ is the solution of the BVP
\[
\left\{
\begin{array}{ll}
-\mathrm{div}(A\nabla z)+\sigma z=-\sigma \mathfrak{h}_\delta^j \quad \mathrm{in}\; \Omega, 
\\
z_{|\Gamma}=- v_\delta^j{_{|\Gamma}}.
\end{array}
\right.
\]

The following inequality will be useful in the sequel
\[
\|\mathfrak{h}_\delta^j\|_{L^{2n/(n+2)}(\Omega)}\le C\delta^{2-n/2}.
\]
Here and henceforth, $C=C(n,\Omega, \kappa, \mathfrak{c},\mathfrak{c}',x_0)>0$ is a generic constant.

Using this inequality and the fact $H_0^1(\Omega)$ is continuously embedded in $L^{2n/(n-2)}(\Omega)$, we obtain by applying H\"older's inequality
\begin{equation}\label{e3.5}
\left|\int_\Omega \sigma \mathfrak{h}_\delta^jvdx\right|\le C\delta^{2-n/2}\|v\|_{H_0^1(\Omega)},\quad v\in H_0^1(\Omega).
\end{equation}

Using \eqref{e3.4} and \eqref{e3.5}, we  derive in a straightforward manner that the following inequality holds
\begin{equation}\label{e3.6}
\|z_{\sigma,\delta}^j\|_{H^1(\Omega)}\le C\delta^{2-n/2}.
\end{equation}

Also, we have 
\begin{equation}\label{e3.7}
\|f_\delta^j\|_{H^{1/2}(\Gamma)}\le C\delta^{-n/2}.
\end{equation}

Assume that $|\sigma (x_0)|=\sigma (x_0)$. Since $w_{\sigma_\ell,\delta}^j=z_{\sigma_\ell,\delta}^j+ \mathfrak{h}_\delta^j$, $\ell=1,2$, we get by using \eqref{e3.7}
\[
C\int_\Omega \sigma (\mathfrak{h}_\delta^j)^2dx \le \int_\Omega \sigma w_{\sigma_1,\delta}^jw_{\sigma_2,\delta}^j dx +C\delta^{4-n}.
\]
Hence
\begin{equation}\label{e3.8}
C\int_\Omega \sigma |\nabla H(\cdot,y_\delta )|^2dx \le \sum_{j=1}^n\int_\Omega \sigma w_{\sigma_1,\delta}^jw_{\sigma_2,\delta}^j dx +\delta^{4-n}.
\end{equation}
On the other hand, we find by applying \eqref{e3.7}
\begin{equation}\label{e3.9}
\left|\langle (\tilde{\Lambda}_{\sigma_1}-\tilde{\Lambda}_{\sigma_2})(f_\delta^j),f_\delta^j\rangle_{1/2}\right|\le C\delta^{-n}\|\tilde{\Lambda}_{\sigma_1}-\tilde{\Lambda}_{\sigma_2}\|_{\mathrm{op}}.
\end{equation}
 From the proof of \cite[(2.8)]{Ch21},  we obtain
 \begin{equation}\label{e3.10}
C |\sigma (x_0)|\le \delta^{n-2}\int_\Omega \sigma |\nabla H(\cdot,y_\delta )|^2dx+\delta^\beta.
 \end{equation}

In light of \eqref{e3.3}, we get by putting together \eqref{e3.8}, \eqref{e3.9} and \eqref{e3.10}
\[
C |\sigma (x_0)|\le \delta^{-2}\|\tilde{\Lambda}_{\sigma_1}-\tilde{\Lambda}_{\sigma_2}\|_{\mathrm{op}}+\delta^{\beta},
\]
from which we derive in a straightforward manner
\begin{equation}\label{e3.11}
|\sigma (x_0)|\le C\|\tilde{\Lambda}_{\sigma_1}-\tilde{\Lambda}_{\sigma_2}\|_{\mathrm{op}}^{\beta/(2+\beta)}.
\end{equation}

\begin{remark}
{\rm
Assume that $\Gamma_0=\Gamma_1=\Gamma$. If $x_0$ in \eqref{e3.11} is chosen so that $\sigma(x_0)=\|\sigma\|_{C(\Gamma)}$ then we get 
\[
\|\sigma_1-\sigma_2\|_{C(\Gamma)}\le C\|\Lambda_{\sigma_1}-\Lambda_{\sigma_2}\|_{\mathrm{op}}^{\beta/(2+\beta)}.
\]
In fact this estimate is not optimal since we know that in the linear case we have a Lipschitz stability (e.g. \cite[Theorem 4.2]{Ch21}).
}
\end{remark}

\section{Proof of Theorem \ref{theorem1.1}}\label{section3}

Before we proceed to the proof of Theorem \ref{theorem1.1} we establish some preliminary results. First,  mimicking the proof of \cite[Lemma 2.2, Corollary 3.2 and Lemma 3.1]{CHY}, we get

\begin{proposition}\label{proposition2.1}
Assume that $\alpha \le \alpha_n$ and $a$ satisfies \textbf{(a1)}. If $f\in \mathfrak{Z}_n$ then $u_a(f)\in \mathfrak{X}_n$. Furthermore, we have
\begin{equation}\label{e2.1}
\|u_a(f)\|_{\mathfrak{X}_n}\le \mathcal{C}_n(1+m+m^\alpha),\quad f\in B_{\mathfrak{Z}_n}(m).
\end{equation}
\end{proposition}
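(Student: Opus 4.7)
The plan is to treat \eqref{e1.3} as a linear elliptic problem
\[
-\mathrm{div}(A\nabla u_a(f)) = -a(u_a(f)) \quad \text{in } \Omega,\qquad u_a(f)_{|\Gamma}=f,
\]
with an $L^p$ right-hand side obtained by combining the growth condition \textbf{(a1)} with Sobolev embeddings, and then applying standard $W^{2,p}$ elliptic regularity. The final Hölder bound then follows from Morrey's embedding since the calibration $n/2<p<n$ gives $\beta_n=2-n/p\in(0,1)$.

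The starting point is the $H^1$ estimate \eqref{e1.4}. For $n=3$ the scheme is short: the Sobolev embedding $H^1(\Omega)\hookrightarrow L^6(\Omega)$ and the bound $|a(z)|\le \mu_1+\mu_2|z|^\alpha$ with $\alpha\le\alpha_3=3$ yield
\[
\|a(u_a(f))\|_{L^2(\Omega)}\le C\bigl(1+\|u_a(f)\|_{L^{2\alpha}(\Omega)}^\alpha\bigr)\le C\bigl(1+\|u_a(f)\|_{H^1(\Omega)}^\alpha\bigr).
\]
Linear $H^2$ regularity with boundary data $f\in H^{3/2}(\Gamma)=\mathfrak{Z}_3$ then provides $u_a(f)\in H^2(\Omega)$ together with a quantitative estimate in terms of $\|f\|_{\mathfrak{Z}_3}$ and $\|a(u_a(f))\|_{L^2(\Omega)}$. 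The Sobolev embedding $H^2(\Omega)\hookrightarrow C^{0,1/2}(\overline{\Omega})$ (valid for $n=3$) closes the argument, and \eqref{e1.4} is used once more to absorb the nonlinear term into a polynomial bound in $m$.

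For $n\ge 4$ the strategy is the same but requires a bootstrap. First, the Sobolev chain $H^1(\Omega)\hookrightarrow L^{2n/(n-2)}(\Omega)$ is used together with intermediate $W^{2,q}$ regularity to raise the integrability of $u_a(f)$ up to $L^{q_n}(\Omega)$; for $n>4$ this is just the embedding $H^2(\Omega)\hookrightarrow L^{2n/(n-4)}(\Omega)$, while for $n=4$ one uses the arbitrary $r\in[1,2)$ to land in $L^{q_4}=L^{2r/(2-r)}$ via the borderline embedding. The growth condition \textbf{(a1)} and the assumption $\alpha\le\alpha_n=q_n/p$ then give $a(u_a(f))\in L^p(\Omega)$ with a polynomial bound in $\|u_a(f)\|_{L^{q_n}}$, and $W^{2,p}$ elliptic regularity for the linear problem (with data $f\in W^{2-1/p,p}(\Gamma)=\mathfrak{Z}_n$) produces $u_a(f)\in W^{2,p}(\Omega)$. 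Morrey's embedding $W^{2,p}(\Omega)\hookrightarrow C^{0,\beta_n}(\overline{\Omega})$ finishes the argument.

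The main obstacle is the bootstrap for $n\ge 4$: one must carefully choose intermediate Sobolev exponents so that the compositions $a\circ u_a(f)$ always sit in the $L^q$ space required by the next iteration, and the procedure has to terminate at a level of integrability compatible with the prescribed $\alpha_n$. The estimate \eqref{e2.1} is then obtained by tracking, through each of the above steps, the dependence on $m=\|f\|_{\mathfrak{Z}_n}$, using \eqref{e1.4} to control the nonlinear contributions. Since the only non-homogeneous ingredients are the $H^1$ bound (which is $O(1+m^\alpha)$) and the linear $\|f\|_{\mathfrak{Z}_n}\le m$ contribution from the boundary data, the resulting bound takes the polynomial form $\mathcal{C}_n(1+m+m^\alpha)$ after consolidating constants into $\mathcal{C}_n$.
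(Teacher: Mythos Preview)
Your approach is essentially what the paper does: the paper gives no detailed argument for this proposition but simply states that it follows by ``mimicking the proof of \cite[Lemma 2.2, Corollary 3.2 and Lemma 3.1]{CHY}'', and those results are precisely the elliptic-regularity/Sobolev-bootstrap scheme you outline (growth of $a$ controls $a(u_a(f))$ in the right $L^q$ space, then $W^{2,q}$ regularity for the linear problem with boundary datum in $\mathfrak{Z}_n$, iterated up to $W^{2,p}$, with Morrey giving the $C^{0,\beta_n}$ bound).

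One small caveat on the last paragraph: when $\alpha>1$ and the bootstrap for $n\ge 4$ genuinely requires several passes, naively feeding the $H^1$ bound \eqref{e1.4} (which is $O(1+m^\alpha)$) through the inequality $\|a(u)\|_{L^q}\le C(1+\|u\|_{L^{\alpha q}}^\alpha)$ at each step compounds the exponent, yielding a bound of the form $C(1+m+m^{\alpha^k})$ rather than $C(1+m+m^\alpha)$. This does not affect the qualitative conclusion (membership in $\mathfrak{X}_n$ and a polynomial bound in $m$, which is all that is used later through the generic constants $C_m$), but your final sentence asserting that the bound automatically collapses to $\mathcal{C}_n(1+m+m^\alpha)$ is not justified by the argument as written; matching the stated exponent requires either a sharper first step or an auxiliary $L^\infty$ estimate as in the cited lemmas of \cite{CHY}.
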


In the case $a=0$, we have instead of \eqref{e2.1} the following estimate
\begin{equation}\label{e2.2}
\|u_0(f)\|_{\mathfrak{X}_n}\le C_0\|f\|_{\mathfrak{Z}_n},\quad f\in \mathfrak{Z}_n,
\end{equation}
where $C_0=C_0(n,\Omega,\kappa)$.

Let $w_a(f)=u_a(f)-u_0(f)$, where $f\in \mathfrak{Z}_n$. Then $w_a(f)\in H_0^1(\Omega)$ and we have
\begin{equation}\label{e2.3}
\int_\Omega A\nabla w_a(f)\cdot\nabla vdx+\int_\Omega a(u_a(f))vdx=0,\quad v \in H_0^1(\Omega ).
\end{equation}

\begin{lemma}\label{lemma2.1}
Assume that $\alpha \le \alpha_n$ and $a$ satisfies \textbf{(a1)} and \textbf{(a2)}. Then
\begin{equation}\label{e2.4}
\|u_a(f)-u_a(g)\|_{H^1(\Omega)}\le C_m\|f-g\|_{\mathfrak{Z}_n},\quad f,g\in B_{\mathfrak{Z}_n}(m).
\end{equation}
\end{lemma}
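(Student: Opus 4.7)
The idea is to reduce the difference $w = u_a(f) - u_a(g)$ to a linear equation with a bounded potential, and then use the coercivity condition $\mathfrak{c} < \kappa \lambda_1$.

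First I would use Proposition \ref{proposition2.1} to get $u_a(f),u_a(g) \in \mathfrak{X}_n \hookrightarrow C(\overline{\Omega})$ with an $L^\infty$ bound $\|u_a(f)\|_\infty,\|u_a(g)\|_\infty \le \varrho_m$, where $\varrho_m = \mathcal{C}_n(1+m+m^\alpha)$ (up to the embedding constant). Writing
\[
a(u_a(f)) - a(u_a(g)) = b \, w, \qquad b(x) = \int_0^1 a'\bigl(s\,u_a(f)(x) + (1-s)u_a(g)(x)\bigr)\, ds,
\]
the function $w$ solves
\[
\left\{
\begin{array}{ll}
-\mathrm{div}(A\nabla w) + b\, w = 0 & \mbox{in}\; \Omega, \\
w_{|\Gamma} = f - g. &
\end{array}
\right.
\]
By \textbf{(a1)} we have $b \ge -\mathfrak{c}$ on $\Omega$, and since the convex combination stays in $[-\varrho_m,\varrho_m]$, assumption \textbf{(a2)} gives $\|b\|_{L^\infty(\Omega)} \le \gamma(\varrho_m)$.

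Next I would lift the boundary data. Set $\phi = u_0(f-g)$, the weak solution of $-\mathrm{div}(A\nabla \phi)=0$ in $\Omega$ with $\phi_{|\Gamma} = f-g$; estimate \eqref{e2.2} gives $\|\phi\|_{\mathfrak{X}_n} \le C_0 \|f-g\|_{\mathfrak{Z}_n}$, in particular $\|\phi\|_{H^1(\Omega)} \le C \|f-g\|_{\mathfrak{Z}_n}$. Then $\tilde{w} = w - \phi \in H_0^1(\Omega)$ satisfies
\[
\int_\Omega A\nabla \tilde{w}\cdot \nabla v\, dx + \int_\Omega b\,\tilde{w}\, v\, dx = -\int_\Omega b\, \phi\, v\, dx, \qquad v \in H_0^1(\Omega).
\]

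Now I would use the coercivity of the bilinear form on $H_0^1(\Omega)$. Using \eqref{e1.1}, $b \ge -\mathfrak{c}$, and the Poincaré inequality $\|v\|_{L^2}^2 \le \lambda_1^{-1}\|\nabla v\|_{L^2}^2$, we obtain
\[
\int_\Omega A\nabla v\cdot \nabla v\, dx + \int_\Omega b\, v^2\, dx \ge \bigl(\kappa - \mathfrak{c}/\lambda_1\bigr)\|\nabla v\|_{L^2(\Omega)}^2, \qquad v\in H_0^1(\Omega),
\]
and $\kappa - \mathfrak{c}/\lambda_1 > 0$ by assumption. Testing with $v=\tilde{w}$ and using the Cauchy–Schwarz inequality together with $\|b\|_\infty \le \gamma(\varrho_m)$ yields
\[
\|\tilde{w}\|_{H_0^1(\Omega)} \le C_m \|\phi\|_{L^2(\Omega)} \le C_m \|f-g\|_{\mathfrak{Z}_n}.
\]
Combining this with the bound on $\phi$ gives $\|w\|_{H^1(\Omega)} \le C_m \|f-g\|_{\mathfrak{Z}_n}$, which is \eqref{e2.4}.

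The main obstacle is minor here: it is simply verifying that the potential $b$ stays bounded uniformly in $f,g \in B_{\mathfrak{Z}_n}(m)$, which is precisely where the $L^\infty$ bound from Proposition \ref{proposition2.1} combined with \textbf{(a2)} is essential; without the monotonicity condition $\mathfrak{c} < \kappa\lambda_1$ embedded in \textbf{(a1)}, the linearized problem would not be unconditionally solvable, and the Lipschitz dependence would fail.
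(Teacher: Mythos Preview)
Your proof is correct and is essentially the same as the paper's: the paper sets $y_a=w_a(f)-w_a(g)$ with $w_a(\cdot)=u_a(\cdot)-u_0(\cdot)$, which is exactly your $\tilde w=w-\phi$, derives the same linear equation with potential $\mathbf d=b$, and then tests with $y_a$ to exploit the coercivity bound $(\kappa-\mathfrak{c}\lambda_1^{-1})\|\nabla y_a\|_{L^2}^2$ together with $|\mathbf d|\le\gamma(\varrho_m)$ and \eqref{e2.2}. The only difference is notational packaging.
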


\begin{proof}

Let $f,g\in \mathfrak{Z}_n$. Set $y_a=w_a(f)-w_a(g)$ and
\[
\mathbf{d}=\int_0^1a'(u_a(f)+t(u_a(f)-u_a(g)))dt.
\]
Then we have
\[
a(u_a(f))-a(u_a(g))=\mathbf{d}(u_a(f)-u_a(g))= \mathbf{d}y_a-\mathbf{d}u_0(f-g).
\]
This, together with \eqref{e2.3} for both $f$ and $g$ yield in a straightforward manner
\begin{equation}\label{e2.5}
\int_\Omega A\nabla y_a(f)\cdot\nabla vdx+\int_\Omega \mathbf{d}y_avdx=\int_\Omega \mathbf{d}u_0(f-g)vdx,\quad v \in H_0^1(\Omega ).
\end{equation}
Since $y_a \in H_0^1(\Omega)$, $v=y_a$ in \eqref{e2.5} gives
\[
\int_\Omega A\nabla y_a(f)\cdot\nabla y_adx+\int_\Omega \mathbf{d}y_a^2dx=\int_\Omega \mathbf{d}u_0(f-g)y_adx.
\]
Hence
\[
(\kappa -\mathfrak{c}\lambda_1^{-1})\|\nabla y_a\|_{L^2(\Omega)}\le \gamma(\varrho_m)\lambda_1^{-1/2}\|u_0(f-g)\|_{L^2(\Omega)}.
\]
That is we have
\[
\|\nabla y_a\|_{L^2(\Omega)}\le C_m\|u_0(f-g)\|_{L^2(\Omega)}.
\]
In consequence we obtain
\[
\|\nabla w_a\|_{L^2(\Omega)}\le C_m\|u_0(f-g)\|_{L^2(\Omega)}.
\]
This inequality, combined with \eqref{e2.2}, implies
\[
\|u_a(f)-u_a(g)\|_{H^1(\Omega)}\le C_m\|f-g\|_{\mathfrak{Z}_n}.
\]
This is the expected inequality.
\end{proof}

Assume that the assumptions of the preceding lemma hold and let $f\in B_{\mathfrak{Z}_n}(m)$. Define on $H_0^1(\Omega)\times H_0^1(\Omega)$ the continuous bilinear form
\[
\mathfrak{b}(u,v)=\int_\Omega[A\nabla u\cdot \nabla v+a'(u_a(f))uv]dx,\quad u,v\in H_0^1(\Omega).
\]
Let $h\in \mathfrak{Z}_m$. As $\mathfrak{b}$ is coercive, according to Lax-Milgram's lemma the variational problem
\[
\mathfrak{b}(u,v)=-\int_\Omega a'(u_a(f))u_0(h)v,\quad v\in H_0^1(\Omega),
\]
admits a unique solution $\ell_a(f,h)\in H_0^1(\Omega)$. Similar computations as above give
\begin{equation}\label{e2.6}
\|\ell_a(f,h)\|_{H^1(\Omega)}\le C_m\|h\|_{\mathfrak{Z}_n}.
\end{equation}

Let $f\in B_{\mathfrak{Z}_n}(m)$ and $h\in \mathfrak{Z}_n$ sufficiently small in such a way that $f+h\in B_{\mathfrak{Z}_n}(m)$. Set
\begin{align*}
&R=w_a(f+h)-w_a(f)-\ell_a(f,h),
\\
&K=-(u_a(f+h)-u_a(f))\int_\Omega [a'(u_a(f)+t(u_a(f+h)-u_a(f))-a'(u_a(f))]dt.
\end{align*}
We easily prove that $R$ satisfies
\[
\mathfrak{b}(R,v)=\int_\Omega Kvdx,\quad v\in H_0^1(\Omega).
\]
Hence
\[
\|R\|_{H^1(\Omega)}\le C_m\|K\|_{L^2(\Omega)}.
\]
Using the uniform continuity of $a'$ in $[-\varrho_m,\varrho_m]$ and \eqref{e2.4}, we get $\|K\|_{L^2(\Omega)}=o\left(\|h\|_{\mathfrak{Z}_n}\right)$. In other words, we proved that $w_a$ is Fr\'echet differentiable at $f$ and $dw_a(f)(h)=\ell_a(f,h)$. Therefore, $u_a$ is Fr\'echet differentiable at $f$ and 
\[
du_a(f)(h)=\ell_a(f,h)+u_0(h).
\]
From the definition of $\ell_a(f,h)$ and $u_0(h)$, we see that $du_a(f)(h)$ is the weak solution of the BVP
\begin{equation}\label{e2.7}
\left\{
\begin{array}{ll}
-\mathrm{div}(A\nabla u) +a'(u_a(f))u=0\quad \mbox{in}\; \Omega ,
\\
u_{|\Gamma}=h .
\end{array}
\right.
\end{equation}
In addition the following estimate holds
\begin{equation}\label{e2.8}
\| du_a(f)(h)\|_{H^1(\Omega)}\le C_m\|h\|_{H^{1/2}(\Gamma)},\quad f\in B_{\mathfrak{Z}_n}(m),\; h\in \mathfrak{Z}_n.
\end{equation}

Let $t\in \mathbb{R}$. Using its definition, we obtain that $\tilde{\Lambda}_a$ is Fr\'echet differentiable in a neighborhood of the origin with
\[
\langle d\tilde{\Lambda}_a^t (0)(h),\varphi\rangle =\int_\Omega [A\nabla v_a(\mathfrak{f}_t,h)\cdot \nabla w+\sigma_a^t v_a(\mathfrak{f}_t,h)w]dx, 
\]
for every $\varphi \in H_{\Gamma_0}^{1/2}(\Gamma)$, $w\in  \dot{\varphi}$ and $h\in \mathfrak{Z}_n^0$, where $\sigma_a^t=a'(u_a(\mathfrak{f}_t))$ and $v_a(\mathfrak{f}_t,h)$ is the solution of the BVP \eqref{e2.7} with $f=\mathfrak{f}_t$.

Recall that $m_n^\tau =\max_{|t|\le \tau}\|\mathfrak{f}_t\|_{\mathfrak{Z}_n^0}$. Then \eqref{e2.8} gives
\[
\|v_a(\mathfrak{f}_t,h)\|_{H^1(\Omega)}\le C_{m_n^\tau}\|h\|_{H^{1/2}(\Gamma)},\quad h\in \mathfrak{Z}_n^0,\; |t|\le \tau.
\]
Hence $d\tilde{\Lambda}_a^t (0)$ is extended to bounded operator from $H_{\Gamma_0}^{1/2}(\Gamma)$ into $H^{1/2}(\Gamma)$ and we have
\[
\sup_{|t|\le \tau}\|d\tilde{\Lambda}_a^t (0)\|_{\mathrm{op}}\le C_{m_n^\tau}.
\]

We are now ready to complete the proof of Theorem \ref{theorem1.1}. Let then $a_1,a_2$ be as in the statement of Theorem \ref{theorem1.1}. Modifying slightly the proof of  \cite[Lemma 5.2]{CHY}, we show that $\sigma_{a_j}^t\in C^{0,\beta_n}(\overline{\Omega})$ with
\[
\|\sigma_{a_j}^t\|_{C^{0,\beta_n}(\overline{\Omega})}\le C_{m_\tau^n},\quad |t|\le \tau,\; j=1,2.
\]
Taking into account that $\sigma_{a_j}^t(x_\ast)=a_j'(t)$, we get by applying \eqref{e3.11} with $x_0=x_\ast$, the following inequality
\[
|a'_1(t)-a'_2(t)|\le C_{m_n^\tau}\|d\tilde{\Lambda}_{a_1}^t (0)-d\tilde{\Lambda}_{a_2}^t (0)\|_{\mathrm{op}}^{\beta_n/(2+\beta_n)},\quad |t|\le \tau,
\]
and hence
\[
\|a'_1-a'_2\|_{C([-\tau ,\tau])}\le  C_{m_n^\tau}\sup_{|t|\le \tau}\|d\tilde{\Lambda}_{a_1}^t (0)-d\tilde{\Lambda}_{a_2}^t (0)\|_{\mathrm{op}}^{\beta_n/(2+\beta_n)}.
\]
This is the expected inequality.

\end{document}